\definecolor{verylight}{gray}{0.97}
\definecolor{light}{gray}{0.9}
\definecolor{medium}{gray}{0.85}
\def\NZQ{\mathbb}               % the font for N,Z,Q,R,C
\def\ZZ{{\NZQ Z}}
\def\frk{\mathfrak}               % font for "Fraktur"
\def\Phi{{\frk N}}
\def\opn#1#2{\def#1{\operatorname{#2}}} % to make operators
\opn\chara{char} \opn\length{\ell} \opn\pd{pd} \opn\rk{rk}
\opn\projdim{proj\,dim} \opn\injdim{inj\,dim} \opn\rank{rank}
\opn\depth{depth} \opn\grade{grade} \opn\height{height} \opn\bheight{bigheight}
\opn\embdim{emb\,dim} \opn\codim{codim}
\opn\Tr{Tr} \opn\bigrank{big\,rank}
\opn\superheight{superheight}\opn\lcm{lcm}
\opn\trdeg{tr\,deg}%\emph{
\opn\reg{reg} \opn\lreg{lreg} \opn\ini{in} \opn\lpd{lpd}
\opn\size{size}\opn{\mult}{mult} \opn{\rev}{rev}
\opn\div{div} \opn\Div{Div} \opn\cl{cl} \opn\Cl{Cl}
\opn\Spec{Spec} \opn\Supp{Supp} \opn\supp{supp} \opn\Sing{Sing}
\opn\Ass{Ass} \opn\Min{Min}
\opn\Ann{Ann} \opn\Rad{Rad} \opn\Soc{Soc}
\opn\Syz{Syz} \opn\Im{Im} \opn\Ker{Ker} \opn\Coker{Coker}
\opn\Am{Am} \opn\Hom{Hom} \opn\Tor{Tor} \opn\Ext{Ext}
\opn\End{End} \opn\Aut{Aut} \opn\id{id} \opn\ini{in}
\opn\nat{nat}
\opn\pff{pf}%   \pf exists already
\opn\Pf{Pf} \opn\GL{GL} \opn\SL{SL} \opn\mod{mod} \opn\ord{ord}
\opn\Gin{Gin}
\opn\Hilb{Hilb}\opn\adeg{adeg}\opn\std{std}\opn\ip{infpt}
\opn\Pol{Pol}
\opn\sat{sat}
\opn\Var{Var}
\opn\Gen{Gen}
\opn\indmatch{indmatch}
\opn\aff{aff} \opn\con{conv} \opn\relint{relint} \opn\st{st}
\opn\lk{lk} \opn\cn{cn} \opn\core{core} \opn\vol{vol}
\opn\link{link} \opn\star{star}
\opn\gr{gr}
\def\pot#1#2{#1[\kern-0.28ex[#2]\kern-0.28ex]}
\opn\dirlim{\underrightarrow{\lim}}
\opn\inivlim{\underleftarrow{\lim}}
\def\Implies{\ifmmode\Longrightarrow \else
        \unskip${}\Longrightarrow{}$\ignorespaces\fi}
\def\implies{\ifmmode\Rightarrow \else
        \unskip${}\Rightarrow{}$\ignorespaces\fi}
\def\iff{\ifmmode\Longleftrightarrow \else
        \unskip${}\Longleftrightarrow{}$\ignorespaces\fi}
\newtheorem{Theorem}{Theorem}[section]
\newtheorem{Lemma}[Theorem]{Lemma}
\newtheorem{Corollary}[Theorem]{Corollary}
\newtheorem{Proposition}[Theorem]{Proposition}
\newtheorem{Remark}[Theorem]{Remark}
\newtheorem{Example}[Theorem]{Example}
\let\epsilon\varepsilon
\let\phi=\varphi
\let\kappa=\varkappa
\def\qed{\ifhmode\textqed\fi
      \ifmmode\ifinner\quad\qedsymbol\else\dispqed\fi\fi}
\def\textqed{\unskip\nobreak\penalty50
       \hskip2em\hbox{}\nobreak\hfil\qedsymbol
       \parfillskip=0pt \finalhyphendemerits=0}
\def\dispqed{\rlap{\qquad\qedsymbol}}
\opn\dis{dis}
\def\pnt{{\raise0.5mm\hbox{\large\bf.}}}
\opn\Lex{Lex}
\newcommand{\inD}[1][\relax]{\def\argone{#1}\def\temprelax{\relax}
  \ifx\argone\temprelax\right.\else\,\middle|#1\right.{}\fi}
\newif\ifbinary
\begin{document}

\title{Binomial edge ideals and rational normal scrolls}

\author{Faryal Chaudhry, Ahmet Dokuyucu, Viviana Ene}
%\thanks{}
\subjclass{}

\address{Faryal Chaudhry, Abdus Salam School of Mathematical Sciences, GC University,
68-B, New Muslim Town, Lahore 54600, Pakistan} \email{chaudhryfaryal@gmail.com}
\address{Ahmet Dokuyucu, Faculty of Mathematics and Computer Science, Ovidius University
Bd. Mamaia 124, 900527 Constanta\\ and Lumina-The University of South-East Europe 
Sos. Colentina nr. 64b, Bucharest,
Romania} \email{ahmet.dokuyucu@lumina.org}

\address{Viviana Ene, Faculty of Mathematics and Computer Science, Ovidius University, Bd.\ Mamaia 124,
 900527 Constanta, Romania, and
 \newline
 \indent Simion Stoilow Institute of Mathematics of the Romanian Academy, Research group of the project  ID-PCE-2011-1023,
 P.O.Box 1-764, Bucharest 014700, Romania} \email{vivian@univ-ovidius.ro}
\thanks{The  first author was supported by  by the Higher Education Commission of Pakistan and the Abdus Salam School of Mathematical Sciences, Lahore, Pakistan. The third author was supported by the grant UEFISCDI,  PN-II-ID-PCE- 2011-3-1023.}

\begin{abstract}
Let $X=\left(
\begin{array}{llll}
	x_1 & \ldots & x_{n-1}& x_n\\
	x_2& \ldots & x_n & x_{n+1}
\end{array}\right)$ be the Hankel matrix of size $2\times n$  and let $G$ be a closed graph on the vertex set $[n].$ We study the binomial  ideal $I_G\subset K[x_1,\ldots,x_{n+1}]$ which is generated by all the $2$-minors of $X$ which correspond to the edges of $G.$ We show that $I_G$ is Cohen-Macaulay. We find the minimal primes of $I_G$ and show that $I_G$ is a set theoretical complete intersection. Moreover, a sharp upper bound for the regularity of $I_G$ is given.
\end{abstract}
\subjclass[2010]{13H10,13P10}
\keywords{Rational normal scroll, closed graph, set-theoretic complete intersection, Cohen-Macaulay}
\maketitle

\section*{Introduction}

Let $K$ be a field and $S=K[x_1,\ldots, x_{n+1}]$ the polynomial ring in $n+1$ variables over the field $K.$ The $2$-minors
of the matrix $X=\left(
\begin{array}{llll}
	x_1 & \ldots & x_{n-1}& x_n\\
	x_2& \ldots & x_n & x_{n+1}
\end{array}\right)
$ generate the ideal $I_X$ of the rational normal curve ${\mathcal X}\subset \mathbb{P}^n.$ It is well-known that $S/I_X$ is Cohen-
Macaulay and has an $S$--linear resolution. We refer the reader to \cite{Ei}, \cite{C}, \cite{BV} for  properties of the ideal of the rational normal scroll. 

On the other hand, in the last few years, the so-called  binomial edge ideals have been intensively studied. They are 
defined as follows. Given a simple graph $G$ on the vertex set $[n]$ with edge set $E(G),$ one considers the ideal $J_G$ 
generated by all the minors $f_{ij}=x_iy_j-x_jy_i$ of the matrix $\left(
\begin{array}{llll}
	x_1 & \ldots & x_{n-1}& x_n\\
	y_1& \ldots & y_{n-1} & y_n
\end{array}\right)$ in the polynomial ring $R=K[x_1,\ldots,x_n,y_1,\ldots,y_n].$ Binomial edge ideals were defined in \cite{
HHHKR} and \cite{Oh}.

In analogy to this construction, in this paper we consider the following ideals in $S$. For a simple graph $G$ on the vertex 
set $[n]$, let $I_G$ be the ideal generated by the $2$-minors $g_{ij}=x_ix_{j+1}-x_jx_{i+1}$ of $X$  with $i<j$ and $\{i,j
\}\in E(G).$ We call $I_G$ the {\em binomial edge ideal of $X$. }

It is clear already from the beginning that unlike the case of classical binomial edge ideals, the ideal $I_G$ strongly 
depends on the labeling of the graph $G.$ For example, if $G$ is the graph displayed in Figure~\ref{fig1}, we get 
$\dim(S/I_G)=3$ for the labeling given in Figure~\ref{fig2} (a) and $\dim(S/I_G)=4$ for the labeling of $G$ given in Figure~
\ref{fig2} (b). 

\begin{figure}[hbt]
\begin{center}
\psset{unit=1cm}
\begin{pspicture}(2,1)(4,3)
\psline(1,1)(2,2)
\psline(2,2)(1,3)
\psline(2,2)(4,2)
\psline(5,1)(4,2)
\psline(5,3)(4,2)

\rput(1,1){$\bullet$}
\rput(2,2){$\bullet$}
\rput(1,3){$\bullet$}
\rput(4,2){$\bullet$}
\rput(5,1){$\bullet$}
\rput(5,3){$\bullet$}

\end{pspicture}
\end{center}
\caption{}
\label{fig1}
\end{figure}

\begin{figure}[hbt]
\begin{center}
\psset{unit=1cm}
\begin{pspicture}(0,0)(2,3)
\psline(2,1)(3,2)
\psline(3,2)(2,3)
\psline(3,2)(5,2)
\psline(6,1)(5,2)
\psline(6,3)(5,2)

\rput(2,1){$\bullet$}
\rput(3,2){$\bullet$}
\rput(2,3){$\bullet$}
\rput(5,2){$\bullet$}
\rput(6,1){$\bullet$}
\rput(6,3){$\bullet$}
\rput(1.8,1){$1$}
\rput(1.8,3){$2$}
\rput(2.7,2){$3$}
\rput(5.3,2){$4$}
\rput(6.2,1){$6$}
\rput(6.2,3){$5$}
\rput(4,0){(b)}

\psline(-3,1)(-2,2)
\psline(-2,2)(-3,3)
\psline(-2,2)(0,2)
\psline(1,1)(0,2)
\psline(1,3)(0,2)

\rput(-3,1){$\bullet$}
\rput(-2,2){$\bullet$}
\rput(-3,3){$\bullet$}
\rput(0,2){$\bullet$}
\rput(1,1){$\bullet$}
\rput(1,3){$\bullet$}
\rput(-3.2,1){$1$}
\rput(-3.2,3){$3$}
\rput(-2.3,2){$2$}
\rput(0.3,2){$4$}
\rput(1.2,1){$6$}
\rput(1.2,3){$5$}
\rput(-1,0){(a)}

\end{pspicture}
\end{center}
\caption{}
\label{fig2}
\end{figure}

However, for some classes of graphs $G$ which admit a natural labeling, we may associate with $G$ a unique ideal 
$I_G$ and study its properties. This is the case, for instance, for closed graphs. We recall from \cite{HHHKR} that $G$ is 
closed if it has a labeling with respect to which is closed. A graph $G$ is called closed with respect to its given labeling 
if  for all edges $\{i,j\}$ and $\{i,k\}$ with $j>i<k$ or $j<i>k$, one has $\{j,k\}\in E(G)$. A closed graph $G$ is chordal 
and, therefore, by Dirac's Theorem, its clique complex $\Delta(G)$ is a quasi-forest. We recall that the clique complex $\Delta(G)$ of $G$ is a simplicial complex whose facets are the maximal cliques of $G,$ that is, the maximal complete subgraphs of $G.$ $\Delta(G)$ is a quasi-forest if  the facets $F_1,\ldots
, F_r$ of $\Delta(G)$ have a leaf order, that is, $F_i$ is a leaf of the simplicial complex generated by $F_1,\ldots,F_i$ 
for all $i>1.$ For  a simplicial complex $\Delta$, a facet $F$ is called a leaf if there is another facet $G$
 of $\Delta$ such that for any facet $H\neq F,$ one has $H\cap F\subseteq G\cap F.$ It was shown in 
\cite{EHH} that if $G$ is closed, then we may label the vertices of $G$ such that the facets of $\Delta(G)$, say $F_1,\ldots
, F_r,$ are intervals, $F_i=[a_i,b_i]\subset [n]$ and if we order $F_1,\ldots,F_r$ such that $a_1<a_2<\cdots <a_r,$ then 
this is a leaf order.

The paper is structured as follows. In Section~\ref{GBases}, we show that the generators of $I_G$ form a Gr\"obner basis with respect to the reverse lexicographic order if and only if $G$ is closed with the given labeling. As a consequence of this theorem, we derive that for a closed graph $G,$ the ideal $I_G$ is Cohen-Macaulay of dimension $1+c,$ where $c$ is the number of connected components of $G.$

 In Section~\ref{main}, we study the properties of $I_G$ for a closed graph $G.$ We compute the minimal prime ideals of $I_G$ in Theorem~\ref{minimal}. By using this theorem, we characterize those connected closed graphs $G$ for which $I_G$ is a radical ideal (Proposition~\ref{radical}). In addition, we show in Corollary~\ref{stci}, that $I_G$ is a set-theoretic complete intersection if $G$ is connected and closed. In the last part of Section~\ref{main}, we a give sharp upper bound for the regularity of $I_G$ (Theorem~\ref{reg}) and we show that $I_G$ has a linear resolution if and only if $G$ is a complete graph. 

\section{Gr\"obner bases}
\label{GBases}

Let $G$ be a graph on the vertex set $[n]$ and $I_G\subset S=K[x_1,\ldots, x_n]$ its associated ideal. The main result of this section is the following.

\begin{Theorem}\label{GB}
The generators of $I_G$ form the reduced Gr\"obner basis of $I_G$ with respect to the reverse lexicographic order induced by $x_1>\cdots >x_n>x_{n+1}$ if and only if $G$ is closed with respect to its given labeling.
\end{Theorem}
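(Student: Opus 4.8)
The plan is to verify Buchberger's criterion. The first step is to record the initial monomials: under the given reverse lexicographic order one computes directly that $\ini_<(g_{ij}) = x_{i+1}x_j$ for every edge $\{i,j\}$ with $i<j$ (so $\ini_<(g_{i,i+1}) = x_{i+1}^2$). Since the generators are binomials, it then suffices to examine the $S$-polynomials of pairs $g_{ab},g_{cd}$ whose initial terms are \emph{not} coprime; for coprime initial terms Buchberger's first criterion gives an automatic reduction to zero. Two initial monomials $x_{a+1}x_b$ and $x_{c+1}x_d$ share a variable exactly when the two edges share a vertex that is an extremal endpoint of both (the cases $a=c$ or $b=d$), or when the indices overlap after the Hankel shift (the cases $a+1=d$ or $b=c+1$).

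For the implication ``closed $\Rightarrow$ Gr\"obner basis'' I would treat these overlaps in turn. When the two edges share their smaller vertex, say $\{a,b\},\{a,d\}$ with $a<b<d$, a short computation gives $S(g_{ab},g_{ad}) = -x_a g_{bd}$, and closedness guarantees $\{b,d\}\in E(G)$, so this is a multiple of a generator and reduces to zero. Symmetrically, when the edges share their larger vertex, say $\{a,b\},\{c,b\}$ with $a<c<b$, one gets $S(g_{ab},g_{cb}) = x_{b+1}g_{ac}$ with $\{a,c\}\in E(G)$ by closedness. These two computations are the crux: they show that closedness supplies exactly the minor needed to clear each such $S$-polynomial. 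The remaining ``shifted'' overlaps ($a+1=d$ or $b=c+1$), where the two edges need not meet in a vertex, are handled using the interval/clique description of closed graphs recalled in the introduction (each edge $\{i,j\}$ lies in a facet interval $[a_s,b_s]$, so all intermediate minors $g_{kl}$ with $i\le k<l\le j$ are available): one reduces the $S$-polynomial in two steps by the two minors produced by these intervals, and it again reaches zero.

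For the converse I would argue by contraposition. If $G$ is not closed there are edges $\{i,j\},\{i,k\}$ meeting in a common extremal vertex $i$ with, say, $i<j<k$ and $\{j,k\}\notin E(G)$. By the first computation above $S(g_{ij},g_{ik}) = -x_i g_{jk}$, whose leading monomial is $x_i x_{j+1}x_k$. Choosing the violating pair with $i$ as small as possible, one checks that no initial monomial $x_{p+1}x_q$ of a generator divides $x_i x_{j+1}x_k$: the only candidate would be $x_{j+1}x_k=\ini_<(g_{jk})$, which is unavailable precisely because $\{j,k\}\notin E(G)$. Hence this $S$-polynomial does not reduce to zero, and the generators fail to be a Gr\"obner basis. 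Reducedness is then read off from the shape of the initial terms and the complementary (standard) terms of each $g_{ij}$.

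The step I expect to be the main obstacle is the analysis of the shifted overlaps: there the two edges carry no common vertex, so the local closedness relation says nothing directly, and one must instead exploit the global interval structure of a closed graph to produce the auxiliary minors that drive the $S$-polynomial to zero. Getting the bookkeeping of these reductions right — and, in the converse, choosing the violating pair so that the remainder is provably irreducible — is where the real work lies.
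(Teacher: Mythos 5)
Your overall strategy coincides with the paper's: record that $\ini_{\rev}(g_{ij})=x_{i+1}x_j$, dispose of coprime leading terms by Buchberger's first criterion, and treat the remaining overlaps by explicit $S$-polynomial identities. Your two shared-vertex computations, $S(g_{ab},g_{ad})=\pm x_a g_{bd}$ and $S(g_{ab},g_{cb})=\pm x_{b+1}g_{ac}$, are exactly the paper's, and you correctly single out the shifted overlap $b=c+1$ as the delicate case. The paper settles it with the explicit representation $S_{\rev}(g_{ij},g_{k\ell})=x_i g_{k+1,\ell}-x_{\ell+1}g_{ik}$ for $j=k+1<\ell$, which you should write out rather than leave as a two-step sketch; on the other hand you are right, and the paper is silent on this, that one must invoke the interval structure of the maximal cliques to know that $g_{ik}$ and $g_{k+1,\ell}$ are actually among the generators (local closedness says nothing about the disjoint edges $\{i,k+1\}$ and $\{k,\ell\}$; for that one needs every edge $\{p,q\}$ to span a clique $[p,q]$, which holds for connected closed graphs with the given labeling but not for arbitrary ones).

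The genuine gap is in your contrapositive argument for the ``only if'' direction. The leading monomial of $S(g_{ij},g_{ik})=\pm x_i g_{jk}$ is $x_ix_{j+1}x_k$, and its degree-two divisors of the form $x_{p+1}x_q$ are not only $x_{j+1}x_k=\ini_{\rev}(g_{jk})$ but also $x_ix_{j+1}=\ini_{\rev}(g_{i-1,j+1})$ and $x_ix_k=\ini_{\rev}(g_{i-1,k})$; if either of the edges $\{i-1,j+1\}$, $\{i-1,k\}$ is present, the leading term \emph{is} reducible and your ``irreducible remainder'' conclusion fails at the first step. Concretely, take $G$ on $[4]$ with edges $\{1,4\},\{2,3\},\{2,4\}$, which is not closed since $\{3,4\}\notin E(G)$: for $i=2$, $j=3$, $k=4$ the $S$-polynomial is $x_2x_3x_5-x_2x_4^2$, whose leading monomial $x_2x_4^2$ is divisible by $\ini_{\rev}(g_{14})=x_2x_4$, and only after a further division step does one reach the nonzero normal form $\pm x_5g_{13}$. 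Choosing the violating pair with $i$ minimal does not visibly exclude edges at $i-1$, so this case analysis must be completed before the contraposition is a proof. (The paper's own one-line assertion that ``$g_{jk}$ must be a generator'' is open to the same objection, but a complete argument has to address it, e.g.\ by continuing the division and showing the remainder survives.)
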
 

\begin{proof}
Let us first assume that the generators form a Gr\"obner basis of $I_G.$ This implies that for any pair of generators 
$g_{ij}=x_ix_{j+1}-x_jx_{i+1}$ and $g_{k\ell}=x_kx_{\ell+1}-x_\ell x_{k+1}$ of $I_G$, the $S$--polynomial $S_{\rev}(g_{ij},
g_{k\ell})$ reduces to zero. Now let $1\leq i<j<k\leq n$ such that $\{i,j\},\{i,k\}\in E(G)$. We have to show that 
$\{j,k\}$ is also an edge of $G.$ We have $$S_{\rev}(g_{ij},g_{ik})=x_ix_{j+1}x_k-x_ix_jx_{k+1}.$$ Since its initial monomial 
is $x_ix_{j+1}x_k$, $g_{jk}$ must be a generator of $I_G,$ thus $\{j,k\}$ is an edge of $G.$ In a similar way we argue if 
$n\geq i>j>k\geq 1.$

For the converse, let us assume that $G$ is closed. We show that the $S$--polynomial 
$S_{\rev}(g_{ij},g_{k\ell})$ reduces to zero with respect to the generators of $I_G$ for any two generators $g_{ij},g_{k\ell
}$ of $I_G.$ Note that 
$\ini_{\rev}(g_{ij})=x_jx_{i+1}$ and $\ini_{\rev}(g_{k \ell})=x_{\ell}x_{ k+1}.$ If these two monomials have disjoint 
supports we know that $S_{\rev}(g_{ij},g_{k\ell})$ reduces to zero with respect to $g_{ij},g_{k\ell}$. Assuming that, for 
instance, 
$i<k,$ we have to consider the following remaining cases.

Case 1. $\ell=j.$ Then one may check that $S_{\rev}(g_{ij},g_{k\ell})=x_{j+1}g_{ik}$ which is obviously a standard representation.

Case 2. $j=k+1.$ If $\ell=k+1$ we get $S_{\rev}(g_{ij},g_{k\ell})=x_{k+2}g_{ik}$. If $\ell>k+1,$ we obtain 
$S_{\rev}(g_{ij},g_{k\ell})=x_ig_{k+1, \ell}-x_{\ell+1}g_{ik}$  which is again a standard representation.

Therefore, in all cases, the $S$-polynomials $S_{\rev}(g_{ij},g_{k\ell})$ reduce to zero with respect to the generators of 
$I_G.$
\end{proof}

As in the case of classical binomial edge ideals associated with graphs, the ideal $I_G$ where $G$ is the line graph on $n$ vertices has nice properties. 

Let $G$ be a line graph on $[n]$ with $E(G)=\{\{i,i+1\}: 1\leq i\leq n-1\}.$ Then $I_G$ is minimally generated by 
$\{g_{i,i+1}=x_{i+1}^2-x_ix_{i+2}:1\leq i\leq n\}$ and $\ini_{\rev}(I_G)=(x_2^2,x_3^2,\ldots,x_n^2)$. As $x_2^2,x_3^2,\ldots
,x_n^2$ is a regular sequence in $S$, it follows that the generators of $I_G$ form a regular sequence as well. Consequently, 
the Koszul  complex of the generators of $I_G$ gives the minimal free resolution of $S/I_G$ over $S.$ 

The following proposition shows that, for a closed graph $G$, the initial ideal of $I_G$ with respect to the reverse lexicographic order has a simple structure.

\begin{Proposition}\label{initial}
Let $G$ be a closed graph on $[n]$ with $\Delta(G)=\langle F_1,\ldots,F_r\rangle$ where $F_i=[a_i,b_i]$ for $1\leq i\leq r,$ 
and $1=a_1<\cdots <a_r<b_r=n.$ Then $\ini_{\rev}(I_G)$ is a primary monomial ideal, hence it is Cohen-Macaulay.
\end{Proposition}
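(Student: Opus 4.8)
The plan is to pass to the initial ideal, where everything becomes combinatorial, and then to invoke the standard characterization of primary monomial ideals. Since $G$ is closed, Theorem~\ref{GB} guarantees that the $g_{ij}$ form a Gr\"obner basis, so $\ini_{\rev}(I_G)$ is generated by the leading terms $\ini_{\rev}(g_{ij})=x_{i+1}x_j$ as $\{i,j\}$ ranges over the edges with $i<j$. My first step is to rewrite this generating set facet by facet. Because $G$ is closed, $\{i,j\}\in E(G)$ precisely when $i,j$ lie in a common facet $F_s=[a_s,b_s]$; fixing $s$ and putting $u=i+1,\ v=j$, the monomials $x_{i+1}x_j=x_ux_v$ run over \emph{all} pairs with $a_s+1\le u\le v\le b_s$ (the diagonal case $u=v$ coming from the edge $\{i,i+1\}$, which lies in $F_s$). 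I therefore expect the description
$$\ini_{\rev}(I_G)=\sum_{s=1}^{r}(x_{a_s+1},\ldots,x_{b_s})^2,$$
so that every minimal generator is a quadratic monomial supported on the variables $x_{a_s+1},\ldots,x_{b_s}$ of a single facet.

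From this description the primary property should follow from two observations. First, the support of every generator is contained in $A:=\bigcup_{s=1}^{r}[a_s+1,b_s]$, so that $\sqrt{\ini_{\rev}(I_G)}$ is the monomial prime $P=(x_i:i\in A)$; when $G$ is connected the consecutive facets overlap, $a_{s+1}\le b_s$, and this set is the full interval $[2,n]$. Second, for each $i\in A$ the pure power $x_i^2$ is one of the generators (it comes from the edge $\{i-1,i\}$). By the standard criterion for monomial ideals, an ideal all of whose minimal generators are supported on $\{x_i:i\in A\}$ and which contains a pure power of each such $x_i$ is $P$-primary; hence $\ini_{\rev}(I_G)$ is primary.

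Finally, that a primary monomial ideal is Cohen--Macaulay is a general fact I would use to close the argument: since none of the generators involve the variables $x_j$ with $j\notin A$, writing $T=K[x_i:i\in A]$ and letting $J\subset T$ be the ideal on the same generators gives $S/\ini_{\rev}(I_G)\cong (T/J)[x_j:j\notin A]$, where $T/J$ is Artinian because $J$ is $\mm_T$-primary. A polynomial extension of an Artinian (hence Cohen--Macaulay) ring is Cohen--Macaulay, so $S/\ini_{\rev}(I_G)$ is Cohen--Macaulay; by Theorem~\ref{GB} the same then holds for $S/I_G$.

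I expect the only real work to be in the first paragraph: one must check carefully that the leading monomials $x_{i+1}x_j$ sweep out the entire quadratic ideal of each facet, with no pairs missing and, crucially, with all the diagonal squares $x_{i+1}^2$ present --- this last point being what makes the radical reach every variable index in $A$ and so forces the ideal to be primary rather than merely radical-prime. Once the identity $\ini_{\rev}(I_G)=\sum_{s}(x_{a_s+1},\ldots,x_{b_s})^2$ is established, the primary and Cohen--Macaulay conclusions are automatic.
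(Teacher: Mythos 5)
Your proof is correct and follows essentially the same route as the paper's: both identify $\ini_{\rev}(I_G)$ as $\sum_{s}(x_{a_s+1},\ldots,x_{b_s})^2$ by working facet by facet (the paper states $\ini_{\rev}(I_F)=(x_{a+1},\ldots,x_b)^2$ for a clique $F=[a,b]$ and sums over the facets, which is exactly your leading-term enumeration). The paper then simply says ``the conclusion follows''; your final two paragraphs supply precisely the standard details (radical, pure powers $x_i^2$, the criterion for a monomial ideal to be primary, and Cohen--Macaulayness via the Artinian quotient) that the authors leave implicit.
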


\begin{proof}
We only need to observe that $I_F,$ where $F=[a,b]$ is a clique, has the initial ideal $\ini_{\rev}(I_F)=(x_{a+1},\ldots, x_b
)^2.$ Then, as $\ini_{\rev}(I_G)=\ini_{\rev}(I_{F_1})+\cdots +\ini_{\rev}(I_{F_r}),$ the conclusion follows.
\end{proof}

%By applying \cite[Corollary 3.3.5]{HH10}, we get the following consequence of the above proposition.

\begin{Corollary}\label{corCM}
Let $G$ be a closed graph. Then $I_G$ is a Cohen-Macaulay ideal of $\dim(S/I_G)=1+c$ where $c$ is the number of connected components of $G$.
\end{Corollary}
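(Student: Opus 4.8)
The plan is to deduce the Corollary directly from Proposition~\ref{initial} together with Theorem~\ref{GB} and standard facts about initial ideals. Since $G$ is closed, Theorem~\ref{GB} tells us that the generators of $I_G$ form a Gr\"obner basis with respect to the reverse lexicographic order, so $\ini_{\rev}(I_G)$ is generated by the leading monomials $x_jx_{i+1}$ of the $g_{ij}$. The guiding principle is that passing to the initial ideal preserves both dimension and the Cohen-Macaulay property: one always has $\dim(S/I_G)=\dim(S/\ini_{\rev}(I_G))$, and if $\ini_{\rev}(I_G)$ is Cohen-Macaulay then so is $S/I_G$ (via the upper-semicontinuity of depth under a flat Gr\"obner degeneration, or equivalently because the Betti numbers of $S/\ini_{\rev}(I_G)$ bound those of $S/I_G$). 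The Cohen-Macaulayness of $\ini_{\rev}(I_G)$ is exactly what Proposition~\ref{initial} provides, so the Cohen-Macaulay assertion for $I_G$ is immediate.

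It remains to compute the dimension, and here I would work with the monomial ideal $\ini_{\rev}(I_G)$, whose dimension I can read off combinatorially. First I would treat the connected case, so that $c=1$ and $\Delta(G)$ has facets $F_i=[a_i,b_i]$ with $1=a_1<\cdots<a_r$ and $b_r=n$, the cliques overlapping consecutively so that $a_{i+1}\le b_i+1$ for the graph to stay connected. By Proposition~\ref{initial} the initial ideal is the sum $\sum_i (x_{a_i+1},\ldots,x_{b_i})^2$. The key observation is that the variables appearing with a square in these generators sweep out, as $i$ ranges over the facets, the whole set $\{x_2,\ldots,x_n\}$, because consecutive intervals overlap or abut. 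The next step is to determine the minimal primes of this squarefree-support monomial ideal: a minimal prime is generated by choosing, for each facet $F_i$, all but one of the variables $x_{a_i+1},\ldots,x_{b_i}$, and the height of the smallest such prime gives the codimension.

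The arithmetic I expect to carry out is that the minimal number of variables one must delete, one per facet but with sharing allowed across overlapping facets, leaves exactly two variables free, so that $\height \ini_{\rev}(I_G)=n-1$ and hence $\dim(S/\ini_{\rev}(I_G))=n+1-(n-1)=2=1+c$ in the connected case. For the general case with $c$ components, the ideal $I_G$ is a sum of the binomial edge ideals of the components in disjoint sets of variables, so $S/I_G$ is (a polynomial extension of) a tensor product over $K$ of the coordinate rings of the components; dimensions add up after accounting for the shared grading, and each connected component contributes dimension $2$ to the curve-like part while the $c-1$ extra free directions from the gluing variables account for the remaining $c-1$, giving $\dim(S/I_G)=2+(c-1)=1+c$. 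The main obstacle is the careful bookkeeping of the minimal primes of $\ini_{\rev}(I_G)$ and, in particular, verifying that exactly two variables survive in the connected case despite the overlaps of the intervals $F_i$; once that combinatorial count is pinned down the rest is formal.
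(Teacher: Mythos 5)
Your overall strategy is the same as the paper's: Cohen--Macaulayness of $I_G$ is pulled back from the Cohen--Macaulayness of $\ini_{\rev}(I_G)$ (Proposition~\ref{initial}) via the standard Gr\"obner degeneration argument (the paper cites \cite[Corollary 3.3.5]{HH10} for exactly this), and the dimension is read off from the initial ideal using $\dim(S/I_G)=\dim(S/\ini_{\rev}(I_G))$. That part is fine.

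The dimension computation, however, contains a genuine error. You describe the minimal primes of $\sum_i(x_{a_i+1},\ldots,x_{b_i})^2$ as being generated by ``all but one'' of the variables from each interval. This is the minimal-prime description for the squarefree ideal $(x_jx_k: j\neq k)$, not for the full square $(x_{a_i+1},\ldots,x_{b_i})^2$, which contains $x_j^2$ for every $j\in[a_i+1,b_i]$; consequently \emph{every} prime containing it must contain \emph{every} variable in that interval, and nothing may be dropped. Followed literally, your rule fails already for $G=K_n$ (one facet $[1,n]$): it would produce a minimal prime of height $n-2$ and hence $\dim(S/I_X)=3$, whereas the correct value is $2$. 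The count ``exactly two variables survive'' that you assert does not follow from the rule you state; it is only correct because the true radical is $\sqrt{\ini_{\rev}(I_G)}=(x_2,\ldots,x_n)$ in the connected case (the union of the intervals $[a_i+1,b_i]$ is $[2,n]$ by connectivity), which is itself prime of height $n-1$, leaving $x_1$ and $x_{n+1}$ free. This is in fact a simpler argument than the one you propose: no enumeration of minimal primes is needed, since $\ini_{\rev}(I_G)$ is $(x_2,\ldots,x_n)$-primary (as Proposition~\ref{initial} already records). The same observation handles $c>1$: the radical is generated by the variables $x_j$ with $j\in\bigcup_i[n_i+2,n_{i+1}]$, whose complement in $\{1,\ldots,n+1\}$ has exactly $c+1$ elements, making your somewhat loose ``gluing variables'' bookkeeping precise.
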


\begin{proof}
$I_G$ is a Cohen-Macaulay ideal by \cite[Corollary 3.3.5]{HH10} and $$\dim(S/I_G)=\dim(S/\ini_{\rev}(I_G))=1+c,$$ the last equality being obvious by the form of $\ini_{\rev}(I_G).$ 
\end{proof}

\section{Properties of the  scroll binomial edge ideals of closed graphs}
\label{main}

In this section we study several algebraic and homological  properties of the ideal $I_G$ where $G$ is a closed graph on the 
vertex set $[n].$

\subsection{Associated primes} We recall that $I_X$ denotes the binomial edge ideal associated with the complete graph $K_n.$ It is well known that $I_X$ is a prime ideal.

\begin{Proposition}\label{IX}
Let $G$ be an arbitrary connected graph on the vertex set $[n].$ Then $I_X$ is a minimal prime of $I_G$. If $P$ is a minimal 
prime ideal of $I_G$ which contains no variable, then $P=I_X.$
\end{Proposition}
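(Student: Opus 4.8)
The plan is to reduce everything to a single Plücker-type syzygy among the $2$-minors of $X$ together with the connectedness of $G$. Writing the columns of $X$ as $v_p=(x_p,x_{p+1})$, the vectors $v_a,v_b,v_c$ lie in a $2$-dimensional space and hence are linearly dependent; reading off the first coordinate of their dependence relation gives the identity
$$x_b\, g_{ac}=x_a\, g_{bc}+x_c\, g_{ab},\qquad a,b,c\in[n],$$
which one also checks by a one-line expansion. I would also record at the outset that $I_G\subseteq I_X$, since the edge minors are among the generators of $I_X$, and that $I_X$ contains no variable, which is immediate because $I_X$ is generated in degree $2$.

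The core of the proof is the claim: \emph{if $P$ is a prime ideal with $I_G\subseteq P$ and $P$ contains no variable, then $I_X\subseteq P$.} Since $I_X=(g_{ac}\colon 1\le a<c\le n)$, it suffices to prove $g_{ac}\in P$ for every pair $a,c$. Because $G$ is connected, $a$ and $c$ are joined by a path $a=w_0,w_1,\dots,w_m=c$, and I would induct on $m$. For $m=1$ the pair is an edge and $g_{ac}\in I_G\subseteq P$. For the inductive step I apply the identity with $(a,b,c)=(w_0,w_{m-1},w_m)$, obtaining
$$x_{w_{m-1}}\, g_{w_0 w_m}=x_{w_0}\, g_{w_{m-1}w_m}+x_{w_m}\, g_{w_0 w_{m-1}}.$$
Here $g_{w_{m-1}w_m}\in I_G$ (an edge) and $g_{w_0 w_{m-1}}\in P$ by induction, so the right-hand side lies in $P$; since $x_{w_{m-1}}\notin P$ and $P$ is prime, $g_{w_0 w_m}\in P$, completing the induction.

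With the claim in hand, both statements follow quickly. For the second assertion, let $P$ be a minimal prime of $I_G$ containing no variable; the claim gives $I_X\subseteq P$, so $I_G\subseteq I_X\subseteq P$ with $I_X$ prime, and minimality of $P$ forces $P=I_X$. The first assertion is then a formal consequence: since $I_X$ is a prime containing $I_G$, it contains some minimal prime $P_0$ of $I_G$; as $P_0\subseteq I_X$ and $I_X$ contains no variable, neither does $P_0$, so the second assertion applies and yields $P_0=I_X$. Hence $I_X$ is itself a minimal prime of $I_G$.

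The only genuine input is the identity above, so I expect the main (though modest) obstacle to be purely bookkeeping: setting up the path induction so that the primality of $P$ is invoked at the correct index, and making sure the degenerate path cases and the antisymmetry $g_{pq}=-g_{qp}$ (needed when indices along a path are not increasing) are handled cleanly. Everything else is formal commutative algebra about minimal primes.
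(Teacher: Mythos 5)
Your proof is correct and follows essentially the same route as the paper: the same three-term syzygy $x_b\,g_{ac}=x_a\,g_{bc}+x_c\,g_{ab}$ combined with induction on the length of a path from $a$ to $c$ in the connected graph $G$. The only cosmetic difference is that the paper packages the induction as the computation of the saturation $I_G:\bigl(\prod_i x_i\bigr)^{\infty}=I_X$, whereas you cancel the variable $x_{w_{m-1}}$ directly inside the prime $P$; both yield the same conclusion by the same formal minimal-prime argument.
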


\begin{proof}
Let $x=\prod_{i=1}^{n+1} x_i.$ We claim that $I_X$ is equal to the saturation of $I_G$ with respect to $x,$ that is, 
$I_X=I_G:x^{\infty}.$ This will be enough to prove the statement of our proposition. Indeed, if $P$ is a minimal prime ideal 
of $I_G$ which does not contain any variable, then $P\supset I_G:x^{\infty}=I_X\supset I_G.$ Since $I_X$ is a prime ideal, it 
follows that $P=I_X.$ 

To prove our claim we first observe that $I_G\subset I_X$ implies that $I_G:x^{\infty}\subset I_X:x^{
\infty}=I_X.$ For the other inclusion, we show that any minimal generator $\delta_{ij}=x_ix_{j+1}-x_jx_{i+1}$ belongs to 
$I_G:x^{\infty}.$ Let $1\le i<j\leq n.$ Since $G$ is connected, there exists a path in $G$ from $i$ to $j$. We prove that 
$\delta_{ij}\in I_G:x^{\infty}$ by induction on the length $r$ of the path. If $\{i,j\}\in E(G)$, there is nothing to prove. 
Let $r>1$ and let $i,i_1,\ldots, i_{r-1},i_r=j$ be a path from $i$ to $j.$ By induction, $\delta_{i,i_{r-1} }\in I_G:x^{
\infty}$. We also have $\delta_{i_{r-1}j}\in I_G:x^{\infty}.$ Then 
$x_{i_{r-1}+1}\delta_{ij}=x_{j+1}\delta_{i i_{r-1}}+x_{i+1}\delta_{i_{r-1}j}\in I_G:x^{\infty},$ therefore, 
$\delta_{ij}\in I_G:x^{\infty}$.
\end{proof}

Now we restrict our study to ideals associated with connected closed graphs.

\begin{Theorem}\label{minimal}
Let $G\neq K_n$ be a connected closed graph on the vertex set $[n]$   and $I_G$ its associated ideal. Then
\[
\Ass(S/I_G)=\Min(I_G)=\{I_X,(x_2,\ldots,x_n)\}.
\]
\end{Theorem}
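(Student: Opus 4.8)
The plan is to prove the two displayed equalities by analyzing the primary decomposition of the initial ideal together with the structural information from Theorem~\ref{minimal}'s hypotheses. First I would establish that $I_X$ and $(x_2,\ldots,x_n)$ are both minimal primes: the former follows from Proposition~\ref{IX}, since $I_X$ is a minimal prime containing no variable. For the latter, I would check directly that $I_G\subseteq(x_2,\ldots,x_n)$. This holds because every generator $g_{ij}=x_ix_{j+1}-x_jx_{i+1}$ of $I_G$ lies in $(x_2,\ldots,x_n)$: indeed at least one of $i+1,j,j+1$ exceeds $1$, so each of the two monomials $x_ix_{j+1}$ and $x_jx_{i+1}$ is divisible by some $x_k$ with $2\le k\le n$ (using $i<j\le n$). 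Thus $(x_2,\ldots,x_n)$ is a prime containing $I_G$; its minimality over $I_G$ must then be argued, for instance by a height count showing it is a minimal such prime.

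The core of the argument is to show there are no other associated primes. The cleanest route uses the fact, from Corollary~\ref{corCM}, that $S/I_G$ is Cohen-Macaulay, hence unmixed: every associated prime has the same height, namely $\codim I_G = n-1$ (since $\dim S/I_G = 1+c = 2$ for a connected graph, giving height $n+1-2=n-1$). This forces $\Ass(S/I_G)=\Min(I_G)$, so it suffices to identify the minimal primes. Both candidate primes indeed have the right dimension: $\dim S/I_X = 2$ since $I_X$ is the ideal of the rational normal curve in $\mathbb{P}^n$, and $\dim S/(x_2,\ldots,x_n)=2$ as well. So both are legitimate codimension $n-1$ minimal primes, consistent with unmixedness.

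To prove these are the \emph{only} minimal primes, I would pass to the initial ideal. By Proposition~\ref{initial}, $\ini_{\rev}(I_G)$ is the primary monomial ideal $\sum_{i=1}^r (x_{a_i+1},\ldots,x_{b_i})^2$, whose radical is $(x_2,\ldots,x_n)$ because the union of the intervals $[a_i+1,b_i]$ covers $\{2,\ldots,n\}$ (as $G\neq K_n$ is connected and closed with $a_1=1$, $b_r=n$). Any minimal prime $P$ of $I_G$ either contains a variable or does not. If $P$ contains no variable, then $P=I_X$ by Proposition~\ref{IX}. If $P$ contains some variable $x_k$, I would argue that $P$ must contain $(x_2,\ldots,x_n)$, and hence equals it by unmixedness. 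The idea is that containing one interior variable forces, via the generators $g_{ij}$ and the closedness-driven chain of relations, all of $x_2,\ldots,x_n$ into $P$; this propagation is essentially the same computation used in Proposition~\ref{IX} where $x_{i_{r-1}+1}\delta_{ij}\in I_G$ spreads membership along paths.

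The main obstacle will be the propagation step for primes containing a variable: showing rigorously that $x_k\in P$ with $2\le k\le n$ forces $P=(x_2,\ldots,x_n)$. I expect to handle this by the structure of $\ini_{\rev}(I_G)$: any monomial minimal prime of the initial ideal, and hence (after lifting) any minimal prime of $I_G$ containing a variable, must contain the radical $(x_2,\ldots,x_n)$ because each squared generator $(x_{a_i+1},\ldots,x_{b_i})^2$ forces every variable in the overlapping intervals once one is present, and the intervals overlap consecutively (the leaf order gives $a_{i+1}\le b_i$). Combined with unmixedness ruling out any prime strictly between $I_G$ and $(x_2,\ldots,x_n)$ of the wrong dimension, this pins down the set of associated primes exactly as claimed.
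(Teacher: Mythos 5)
Your skeleton matches the paper's: Cohen--Macaulayness gives unmixedness so $\Ass(S/I_G)=\Min(I_G)$, Proposition~\ref{IX} identifies the unique minimal prime containing no variable as $I_X$, and it remains to show that a minimal prime containing a variable equals $(x_2,\ldots,x_n)$. But the step you yourself flag as ``the main obstacle'' is resolved incorrectly. There is no containment-preserving lift from minimal primes of $\ini_{\rev}(I_G)$ to minimal primes of $I_G$: by Proposition~\ref{initial} the initial ideal is $(x_2,\ldots,x_n)$-primary, so its unique minimal prime contains \emph{every} interior variable, yet $I_X$ is a minimal prime of $I_G$ containing \emph{no} variable at all. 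This already shows that ``any minimal prime of $I_G$ containing a variable, after lifting, must contain the radical of the initial ideal'' is not a valid inference; passing to $\ini_{\rev}(I_G)$ transfers numerical data (dimension, Hilbert function) but not the primes themselves.

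The propagation has to be carried out on $I_G$ directly, and it is elementary once you invoke the structural fact that a connected closed graph (with its interval labeling) contains all edges $\{i,i+1\}$, so that $g_{i,i+1}=x_ix_{i+2}-x_{i+1}^2\in I_G\subseteq P$. If $x_i\in P$ with $2\le i\le n$, then $x_{i+1}^2\in P$ forces $x_{i+1}\in P$ (upward propagation), while $g_{i-2,i-1}=x_{i-2}x_i-x_{i-1}^2\in P$ together with $x_i\in P$ forces $x_{i-1}\in P$ (downward propagation); iterating gives $(x_2,\ldots,x_n)\subseteq P$, and equality follows because $(x_2,\ldots,x_n)$ is a prime containing $I_G$. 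The same mechanism shows a minimal prime cannot contain $x_1$ or $x_{n+1}$, since it would then contain $(x_1,\ldots,x_n)\supsetneq(x_2,\ldots,x_n)\supseteq I_G$, contradicting minimality. A second, smaller gap: your verification that $I_G\subseteq(x_2,\ldots,x_n)$ does not treat the one dangerous generator $g_{1n}=x_1x_{n+1}-x_2x_n$, whose monomial $x_1x_{n+1}$ lies outside $(x_2,\ldots,x_n)$; the containment holds only because $G\neq K_n$ forces $\{1,n\}\notin E(G)$ (were $\{1,n\}$ an edge, closedness would make $G$ complete). Your observation that ``at least one of $i+1,j,j+1$ exceeds $1$'' does not exclude this case.
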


\begin{proof}
By Corollary~\ref{corCM} and  Proposition~\ref{IX}, we only need to show that if 
$P$ is a minimal prime of $I_G$ which contains at least one variable, then $P=(x_2,\ldots,x_n).$ Let $P\in \Min(I_G)$ 
such that $x_i\in P$ for some $2\leq i\leq n.$ Let $i<n.$ Then, as $\{i,i+1\}\in E(G),$ we get $x_{i+1}\in P.$ Thus, 
$(x_i,\ldots,x_{n})\subset P.$ If $i=2,$ we get $P\supset (x_2,\ldots,x_n)\supset I_G,$ thus we have $P=(x_2,\ldots,x_n).$ Let now $i>2.$ Since $\{i-2,i-1\}\in E(G),$ we get $x_{i-1}\in P.$ Thus, for $i>2$, we get as well $P=(x_2,\ldots,x_n)$.

Let us now assume that $P\in \Min(I_G)$ and $x_1\in P.$ Since $\{i,i+1\}\in E(G)$ for all $i,$ we get $(x_1,\ldots,x_{n})
\subset P,$ which is impossible since $P$ is minimal. A similar argument shows that $P$ cannot contain $x_{n+1}.$
\end{proof}

As a consequence of the above theorem, we may characterize the radical ideals $I_G$.

\begin{Proposition}\label{radical}
Let $G$ be a connected closed graph on the vertex set $[n].$ Then $I_G$ is a radical ideal if and only if $G=K_n$ or 
$\Delta(G)=\langle [1,n-1],[2,n]\rangle$. 
\end{Proposition}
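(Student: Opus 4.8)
The plan is to use Theorem~\ref{minimal} to decide exactly when $I_G$ coincides with its radical. Since $I_G$ is Cohen--Macaulay by Corollary~\ref{corCM}, it is unmixed, so $\Ass(S/I_G)=\Min(I_G)$ and $I_G$ is radical if and only if it equals the intersection of its minimal primes. If $G=K_n$ then $I_G=I_X$ is prime, hence radical, so that case is disposed of immediately. For $G\neq K_n$ connected and closed, Theorem~\ref{minimal} tells us $\Min(I_G)=\{I_X,(x_2,\dots,x_n)\}$, so the radical is $\sqrt{I_G}=I_X\cap(x_2,\dots,x_n)$, and the question becomes: for which such $G$ is $I_G=I_X\cap(x_2,\dots,x_n)$?

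\medskip

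First I would compute the right-hand side $I_X\cap(x_2,\dots,x_n)$ concretely, or at least pin down its Hilbert function via the dimension/multiplicity data we already control. Both $I_G$ and the intersection are Cohen--Macaulay of the same dimension $2$ (here $c=1$), so $I_G$ is radical precisely when the two ideals have the same multiplicity, equivalently the same number of minimal primes counted with the correct degrees — and since both minimal primes appear with multiplicity one in a radical unmixed ideal, the comparison reduces to comparing $e(S/I_G)$ with $e(S/I_X)+e(S/(x_2,\dots,x_n))=e(S/I_X)+1$. The multiplicity $e(S/I_G)$ is computable from $\ini_{\rev}(I_G)$ by Proposition~\ref{initial}, since $e(S/I_G)=e(S/\ini_{\rev}(I_G))$ and the initial ideal is the sum $\sum_i (x_{a_i+1},\dots,x_{b_i})^2$ whose primary/multiplicity structure is transparent. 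Thus the heart of the argument is a counting computation: read off $e(S/I_G)$ in terms of the interval data $F_i=[a_i,b_i]$, and determine when it drops to the minimal value forced by exactly two reduced primary components.

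\medskip

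I expect the combinatorics of the initial ideal to be the main obstacle, so I would organize it as follows. Write $\ini_{\rev}(I_G)=\sum_{i=1}^r \mm_i^2$ where $\mm_i=(x_{a_i+1},\dots,x_{b_i})$; the overlaps between consecutive cliques $F_i\cap F_{i+1}$ govern how the squares of these monomial prime powers interact. I would analyze the degree of this Artinian-modulo-a-linear-system quotient and show that the multiplicity exceeds $e(S/I_X)+1$ as soon as there are at least three cliques, or two cliques whose overlap is larger than a single vertex. The claim should be that equality holds exactly when $\Delta(G)$ consists of the two maximal intervals $[1,n-1]$ and $[2,n]$ overlapping in $[2,n-1]$ — this is the graph with $\Delta(G)=\langle[1,n-1],[2,n]\rangle$ — since that configuration is the unique non-complete closed graph whose initial multiplicity equals $e(S/I_X)+1$.

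\medskip

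Finally I would close the equivalence in both directions. For the forward direction, the multiplicity comparison above shows that if $I_G$ is radical then $G$ must be either $K_n$ or the two-interval graph. For the converse, $K_n$ gives the prime $I_X$, and for $\Delta(G)=\langle[1,n-1],[2,n]\rangle$ I would verify directly that $I_G=I_X\cap(x_2,\dots,x_n)$ by checking the containment $I_G\subseteq I_X\cap(x_2,\dots,x_n)$ (immediate, since $g_{ij}\in I_X$ always and each generator $g_{ij}$ with $\{i,j\}\in E(G)$ lies in $(x_2,\dots,x_n)$ because for this graph every edge involves an index in $\{2,\dots,n\}$ in the relevant variable position) together with the reverse inclusion obtained by the equality of multiplicities just established, the two Cohen--Macaulay ideals of the same dimension and multiplicity sharing the same minimal primes forcing $I_G=\sqrt{I_G}$.
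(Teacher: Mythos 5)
Your overall framework is legitimate and genuinely different from the paper's: since $I_G$ is Cohen--Macaulay, hence unmixed, the associativity formula does reduce radicality to the single numerical condition $e(S/I_G)=e(S/I_X)+e(S/(x_2,\dots,x_n))=n+1$, and this multiplicity is computable from $\ini_{\rev}(I_G)$ after killing the regular sequence $x_1,x_{n+1}$. The problem is that the decisive step --- actually evaluating $\dim_K S/(\ini_{\rev}(I_G),x_1,x_{n+1})$ in terms of the interval data and characterizing the equality case --- is never carried out, and the one concrete prediction you make about its outcome is backwards. You assert the multiplicity exceeds $n+1$ ``as soon as there are two cliques whose overlap is larger than a single vertex,'' but the equality case you then name, $\Delta(G)=\langle[1,n-1],[2,n]\rangle$, has overlap $[2,n-1]$ of size $n-2\ge 2$ for $n\ge 4$; your two sentences contradict each other and the proposition. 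The actual count for two cliques $F_1=[1,b_1]$, $F_2=[a_2,n]$ is $e(S/I_G)=1+(n-1)+(a_2-1)(n-b_1)$ (the degree-two survivors modulo $(x_2,\dots,x_{b_1})^2+(x_{a_2+1},\dots,x_n)^2$ are exactly the $x_ix_j$ with $i\in[2,a_2]$, $j\in[b_1+1,n]$, and nothing survives in degree $\ge 3$), which equals $n+1$ precisely when $a_2=2$ and $b_1=n-1$, i.e.\ when the overlap is as \emph{large} as possible; a single-vertex overlap gives $e=n+(b_1-1)(n-b_1)\ge 2n-2$. So the combinatorial heart of the proof is both missing and mis-stated, and the case of three or more cliques is not addressed at all.

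For comparison, the paper avoids multiplicities entirely: it proves the explicit identity $I_X\cap(x_2,\dots,x_n)=I_H$ with $H$ the graph whose clique complex is $\langle[1,n-1],[2,n]\rangle$, by writing $f=\sum h_{ij}\delta_{ij}$, using that $x_1x_{n+1}$ is regular on $S/(x_2,\dots,x_n)$ to force $h_{1n}\in(x_2,\dots,x_n)$, and checking $x_j\delta_{1n}\in I_H$ for $2\le j\le n$. Radicality then becomes $I_G=I_H$, which in degree two forces $G=H$. If you want to salvage your route, you must do the Hilbert-function computation honestly and in particular reverse the stated overlap condition; as written, the argument would reach the wrong set of graphs.
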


\begin{proof}
The claim is evident if $G=K_n.$ Let now $G\neq K_n.$ Then, by the above theorem, we have 
$\sqrt{I_G}=I_X\cap(x_2,\ldots,x_n).$ We claim that  $I_X\cap(x_2,\ldots,x_n)=I_H$ where $H$ is the closed graph on $[n]$
whose clique complex is generated by the intervals $[1,n-1]$ and $[2,n].$ We obviously have $I_H\subset I_X\cap(x_2,\ldots,x_
n).$ Let $f\in I_X\cap(x_2,\ldots,x_n).$ Then $f=\sum_{1\leq i<j\leq n}h_{ij}\delta_{ij}$ where $\delta_{ij}$ are the 
generators of $I_X$ and $h_{ij}$ are polynomials in $S.$ We have to show that $h_{1n}\delta_{1n}\in I_H$ because $\delta_{ij}\in I_H$ for all $i<j$ with $(i,j)\neq (1,n).$ Since $\delta_{ij}\in (x_2,\ldots x_n)$ for all $i<j$ such that 
$(i,j)\neq (1,n),$ it follows that $h_{1n}\delta_{1n}\in (x_2,\ldots,x_n)$ which implies that $x_1x_{n+1}h_{1n}\in 
(x_2,\ldots,x_n).$ But $x_1x_{n+1}$ is regular on $S/(x_2,\ldots,x_n).$ Thus $h_{1n}\in (x_2,\ldots,x_n)$. We show that 
for all $2\leq j\leq n,$ we have $x_j\delta_{1n}\in I_H$ which will end our proof. For $j=2$ we have $x_j\delta_{1n}=
x_2(x_1x_{n+1}-x_2x_n)=x_1\delta_{2n}+x_n\delta_{12}\in I_H.$ For $j\geq 3,$ we obtain 
$x_j\delta_{1n}=x_{n+1}\delta_{1,j-1}+x_2\delta_{j-1,n}\in I_H.$
\end{proof}

Theorem~\ref{minimal} has the following nice consequence.

\begin{Corollary}\label{stci}
Let $G$ be a connected closed graph. Then $I_G$ is a set-theoretic complete intersection.
\end{Corollary}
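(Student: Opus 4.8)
The plan is to produce $\height(I_G)$ many polynomials whose radical is $\sqrt{I_G}$. First I record the codimension: since $G$ is connected, Corollary~\ref{corCM} gives $\dim(S/I_G)=2$, whence $\height(I_G)=(n+1)-2=n-1$. So the goal is to find $n-1$ elements of $S$ that generate $I_G$ up to radical. The structural input I would exploit is that a connected closed graph contains every edge $\{i,i+1\}$, $1\le i\le n-1$. Indeed, writing $\Delta(G)=\langle F_1,\ldots,F_r\rangle$ with $F_k=[a_k,b_k]$ in the standard labeling, maximality of the facets forces $b_k<b_{k+1}$, while connectivity forces $a_{k+1}\le b_k$ (a strict gap $a_{k+1}>b_k$ would split $[n]$ into two parts joined by no edge). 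Hence each consecutive pair $i,i+1$ lies in a common interval $F_k$, so $\{i,i+1\}\in E(G)$. Consequently the line-graph ideal $I_L=(g_{1,2},\ldots,g_{n-1,n})$, with $g_{i,i+1}=x_{i+1}^2-x_ix_{i+2}$, is contained in $I_G$; and, as observed before Proposition~\ref{initial}, these $n-1$ binomials form a regular sequence, so $I_L$ is a complete intersection of height $n-1$.

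Next, for $G\neq K_n$ I would compare radicals. Theorem~\ref{minimal} applied to $G$ gives $\sqrt{I_G}=I_X\cap(x_2,\ldots,x_n)$. The line graph on $[n]$ is itself connected, closed, and (for $n\ge 3$) distinct from $K_n$, so the very same theorem applies to it and yields $\sqrt{I_L}=I_X\cap(x_2,\ldots,x_n)$. Therefore $\sqrt{I_G}=\sqrt{I_L}=\sqrt{(g_{1,2},\ldots,g_{n-1,n})}$, and $I_G$ is cut out set-theoretically by these $n-1=\height(I_G)$ equations. (For $n\le 2$ the ideal is principal, so the claim is trivial.)

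The remaining, and genuinely harder, case is $G=K_n$, where $I_G=I_X$ is the prime ideal of the rational normal curve. Here the line-graph equations do not suffice, since together they cut out the spurious component $(x_2,\ldots,x_n)$ in addition to $I_X$; this is the main obstacle, and it cannot be resolved by the reduction above. I would dispose of it by invoking the classical fact that the rational normal curve is a set-theoretic complete intersection (see \cite{BV}), which supplies $n-1$ forms whose radical equals $I_X$. Combining the two cases then gives the assertion for every connected closed graph.
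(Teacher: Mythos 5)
Your proof is correct and follows essentially the same route as the paper: reduce to the line graph $P_n$ (whose $n-1$ generators lie in $I_G$ and form a regular sequence), use Theorem~\ref{minimal} to identify $\sqrt{I_G}=\sqrt{I_{P_n}}$ when $G\neq K_n$, and quote \cite{BV} for the case $G=K_n$. The only difference is that you spell out why every edge $\{i,i+1\}$ belongs to a connected closed graph, a point the paper treats as obvious.
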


\begin{proof}
The statement is known for $G=K_n$ \cite{BV}. Let now $G\neq K_n$ and let $P_n$ be the line graph on $n$ vertices. Obviously, the generators of $I_{P_n}$ are generators for $I_{G}$ as well. By Theorem~\ref{minimal}, we have 
$\sqrt{I_G}=\sqrt{I_{P_n}}.$ The ideal $I_{P_n}$ is generated by $n-1=\height(I_G)$ polynomials. Therefore, $I_G$ is a set-theoretic complete intersection.
\end{proof}

\subsection{Regularity} Let $G$ be a closed graph on the vertex set $[n]$ and $I_G\subset S$ its associated ideal. The first question we may ask is under which conditions on the graph $G$ the ideal $I_G$ has a linear resolution. The next proposition answers this question. We first need the following known statement.

\begin{Lemma}\cite[Exercise 4.1.17 (c)]{BH}\label{exerc}
Let $R=K[x_1,\ldots,x_n]/I$ be a homogeneous Cohen-Macaulay ring. The ring $R$ has an $m$-linear resolution if and only if 
$I_j=0$ for $j<m$ and $\dim_K I_m={m+g-1\choose m}$ where $g=\height I.$
\end{Lemma}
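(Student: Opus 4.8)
The plan is to reduce to the Artinian case and then recognize the numerical hypothesis as forcing the defining ideal to be a power of the maximal ideal. Assume first that $K$ is infinite (otherwise pass to $K(x)$ or an infinite extension, under which all graded Betti numbers and Hilbert functions are unchanged). Since $R$ is Cohen--Macaulay of codimension (that is, height) $g$, we have $\pd_S R=g$, and a sequence $\ell_1,\ldots,\ell_d$ of $d=n-g$ general linear forms is a regular sequence on $R$ with $\bar S:=S/(\ell_1,\ldots,\ell_d)$ a polynomial ring in $g$ variables. Let $\bar{\mm}$ be its maximal ideal, $\bar I\subset\bar S$ the image of $I$, and $\bar R=\bar S/\bar I$. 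Tensoring the minimal graded free resolution of $R$ with $S/(\ell_1,\ldots,\ell_d)$ stays acyclic and minimal, because each $\ell_k$ is a nonzerodivisor on the relevant quotient; hence the graded Betti numbers of $\bar R$ over $\bar S$ equal those of $R$ over $S$, and in particular $R$ has an $m$-linear resolution if and only if $\bar R$ does.

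Next I would record the Hilbert-function bookkeeping. The hypothesis $I_j=0$ for $j<m$ gives $\bar I_j=0$ for $j<m$ as well, and, using the short exact sequences $0\to M(-1)\xrightarrow{\ \ell_k\ } M\to M/\ell_k M\to 0$ at each reduction step, one checks that $\dim_K\bar I_m=\dim_K I_m$ (at degree $m$ the correction term involves only $\dim_K I_{m-1}=0$). Since $\bar S$ has $g$ variables, $\dim_K\bar S_m=\binom{m+g-1}{m}$. Thus, under the standing assumption $I_j=0$ for $j<m$, the condition $\dim_K I_m=\binom{m+g-1}{m}$ is equivalent to $\bar I_m=\bar S_m$.

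It then remains to solve the Artinian problem: for $\bar R=\bar S/\bar I$ with $\bar I_j=0$ for $j<m$, I claim $\bar R$ has an $m$-linear resolution iff $\bar I=\bar{\mm}^m$ iff $\bar I_m=\bar S_m$. If $\bar I_m=\bar S_m$, then since $\bar I$ is generated in degrees $\ge m$ we have $\bar I\subseteq\bar{\mm}^m$, while $\bar I\supseteq(\bar I_m)=(\bar S_m)=\bar{\mm}^m$, whence $\bar I=\bar{\mm}^m$; and $\bar{\mm}^m$ is well known to have a linear resolution, so $\bar R$ has an $m$-linear resolution. Conversely, if $\bar R$ has an $m$-linear resolution then $\reg\bar R=m-1$; as $\bar R$ is Artinian, $\reg\bar R=\max\{j:\bar R_j\neq 0\}$, so $\bar R_j=0$ for $j\ge m$, i.e. $\bar I=\bar{\mm}^m$, giving $\bar I_m=\bar S_m$. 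Combining this Artinian equivalence with the two transfer statements above yields the Lemma.

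The main obstacle is the reduction step itself: one must verify both that a general linear form stays a nonzerodivisor (so that tensoring the resolution preserves exactness, minimality, and hence the property of being $m$-linear) and that $\dim_K I_m$ is unaffected by the reduction, for which the vanishing $I_j=0$ in degrees $j<m$ is exactly what is needed. Once in the Artinian setting, the heart of the matter is the clean numerical coincidence $\dim_K\bar S_m=\binom{m+g-1}{m}$, which turns the dimension hypothesis into the statement that $\bar I$ fills all of degree $m$ and hence equals $\bar{\mm}^m$. As an alternative to invoking $\reg\bar R=\max\{j:\bar R_j\neq0\}$, one could argue purely with Hilbert series: an $m$-linear resolution forces the numerator of $H_{\bar R}(t)$ to be $1+\sum_{i\ge1}(-1)^i\beta_i t^{m+i-1}$, and requiring divisibility by $(1-t)^g$ pins down the $\beta_i$ uniquely via a confluent Vandermonde system in the distinct exponents $0,m,m+1,\ldots,m+g-1$, giving $\beta_1=\dim_K I_m=\binom{m+g-1}{m}$.
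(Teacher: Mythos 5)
The paper does not prove this lemma at all --- it is quoted verbatim as Exercise 4.1.17(c) of Bruns--Herzog --- so there is no in-paper argument to compare against. Your proof is correct and is essentially the standard (and presumably intended) solution: reduce by a maximal regular sequence of general linear forms, observe that graded Betti numbers are preserved and that the hypothesis $I_j=0$ for $j<m$ makes $\dim_K I_m$ survive the reduction, and then identify the condition $\dim_K \bar I_m=\binom{m+g-1}{m}=\dim_K\bar S_m$ with $\bar I=\bar{\mm}^m$, whose resolution is linear. All the individual steps check out: the Artinian reduction argument via $\Tor_1^S(R,S/\ell)=0$, the Hilbert-function bookkeeping $\dim_K\bar I_m=\sum_k(-1)^k\binom{d}{k}\dim_K I_{m-k}=\dim_K I_m$, and the use of $\reg\bar R=\max\{j:\bar R_j\neq 0\}$ for Artinian modules. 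Two minor points you could make explicit: in the forward implication you should say that an $m$-linear resolution forces $I$ to be generated in degree $m$, hence $I_j=0$ for $j<m$, which is what licenses the ``standing assumption'' in that direction as well; and the step $\reg\bar R=m-1$ tacitly assumes $\bar I\neq 0$, i.e.\ excludes the degenerate case $I=0$ (where $g=0$ and the binomial coefficient is $0$ anyway). Neither affects the substance.
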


\begin{Proposition}\label{linres}
Let $G$ be a closed graph on $[n].$ Then the following are equivalent:
\begin{itemize}
\item [(a)] $G$ is a complete graph;
\item [(b)] $I_G$  has a linear resolution;
\item [(c)] All powers of $I_G$ have a linear resolution.
\end{itemize}
\end{Proposition}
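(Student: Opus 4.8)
The plan is to prove the cycle of implications, keeping the two easy directions separate from the one genuinely new computation. Note first that $I_G$ is generated by the binomials $g_{ij}$, all of degree $2$, so the only possible linear resolution is a $2$-linear one; throughout I take $m=2$ in Lemma~\ref{exerc}.

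The implication (c) $\Rightarrow$ (b) is immediate, since $I_G=I_G^1$ is itself a power. For (a) $\Rightarrow$ (c), observe that when $G=K_n$ every $2$-minor of $X$ is a generator, so $I_G=I_X$ is precisely the ideal of the rational normal curve. It is classical that $I_X$ has a $2$-linear resolution, its minimal resolution being the Eagon--Northcott complex of the $2\times n$ matrix $X$, and that all powers $I_X^k$ again have linear resolutions; I would import this from the structure theory of the scroll (e.g.\ from \cite{BV}, via the Rees algebra of $I_X$) rather than reprove it.

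The substance is (b) $\Rightarrow$ (a), and here I would apply Lemma~\ref{exerc}. By Corollary~\ref{corCM} the ring $S/I_G$ is Cohen--Macaulay, so the lemma applies with $g=\height I_G$ and $m=2$. The hypothesis $(I_G)_j=0$ for $j<2$ holds because $I_G$ is generated in degree $2$, so what remains is the numerical criterion $\dim_K (I_G)_2=\binom{g+1}{2}$. To evaluate the left-hand side I would invoke Theorem~\ref{GB}: since $G$ is closed, the $g_{ij}$ form the reduced Gr\"obner basis, and their initial monomials $\ini_{\rev}(g_{ij})=x_jx_{i+1}$ are pairwise distinct as $\{i,j\}$ ranges over the edges of $G$; hence the generators are $K$-linearly independent and $\dim_K(I_G)_2=|E(G)|$. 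For the right-hand side, Corollary~\ref{corCM} gives $\dim(S/I_G)=1+c$ with $c$ the number of connected components, so for a connected $G$ one has $g=\height I_G=(n+1)-2=n-1$ and $\binom{g+1}{2}=\binom{n}{2}$. Thus the criterion reads $|E(G)|=\binom{n}{2}$, and since $\binom{n}{2}$ is the maximal possible number of edges of a graph on $[n]$, equality forces $G=K_n$.

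I expect the main obstacle to be the direction (a) $\Rightarrow$ (c): the linearity of $I_X$ itself falls out of the very same $m=2$ count (indeed $|E(K_n)|=\binom{n}{2}=\binom{g+1}{2}$), but the assertion that \emph{every} power $I_X^k$ has a linear resolution is not internal to the Gr\"obner-basis and Cohen--Macaulay machinery developed so far and must be drawn from the external theory of the rational normal scroll, whereas (b) $\Rightarrow$ (a) is essentially bookkeeping once Lemma~\ref{exerc} and Theorem~\ref{GB} are available. A secondary point to watch is connectivity: in general the edge count $\dim_K(I_G)_2=|E(G)|$ should be compared with $\binom{n-c+1}{2}$, and it is in the connected case that the criterion collapses cleanly to completeness, so I would either assume $G$ connected or reduce to that case at the outset of the (b) $\Rightarrow$ (a) step.
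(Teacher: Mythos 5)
Your argument follows the paper's route: both hinge on Lemma~\ref{exerc} applied with $m=2$ via the Cohen--Macaulayness from Corollary~\ref{corCM}, both reduce (b)$\Rightarrow$(a) to the count $\dim_K(I_G)_2=|E(G)|$ versus $\binom{g+1}{2}$, and both outsource (a)$\Rightarrow$(c) to the known theory of the rational normal curve (the paper cites \cite{C} and \cite{BCV}; \cite{BV} is about set-theoretic complete intersections and is not the right source for linear powers, but that is a citation quibble). The one genuine gap is the disconnected case of (b)$\Rightarrow$(a), which you flag but do not close. The proposition does not assume $G$ connected, and ``reducing to the connected case'' is not automatic here: the component ideals $I_{G_i}$ live in overlapping sets of variables (consecutive components share a variable of $S$), and you would still need to know that linearity of the resolution of $I_G$ passes to each $I_{G_i}$. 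The paper instead runs the numerical criterion directly with $c$ components: $g=n-c$, so Lemma~\ref{exerc} forces
\[
\binom{n-c+1}{2}=\dim_K(I_G)_2=\sum_{i=1}^c\dim_K(I_{G_i})_2\le\sum_{i=1}^c\binom{n_i}{2},
\]
which, after substituting $m_i=n_i-1$, becomes $\bigl(\sum_i m_i\bigr)^2\le\sum_i m_i^2$ and hence forces $c=1$; only then does the clean comparison $|E(G)|=\binom{n}{2}$ kick in. You named exactly the right comparison ($|E(G)|$ against $\binom{n-c+1}{2}$) but left the inequality unexecuted; carrying out that two-line Cauchy--Schwarz-type estimate is what completes the proof.
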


\begin{proof}
(a)$\Rightarrow$(b) is well known. Let us prove (b)$\Rightarrow$(a). Let $G$ be closed with $c$ connected components, say 
$G_1,\ldots,G_c.$ Since $I_G$ has a $2$--linear resolution, by Lemma~\ref{exerc} and Corollary~\ref{corCM}, it follows that 
$\dim_K(I_G)_2={n-c+1\choose 2}.$ Hence, we get 
\[
{n-c+1\choose 2}=\sum_{i=1}^c \dim_K(I_{G_i})_2\leq \sum_{i=1}^c{n_i\choose 2}
\] where $n_i=|V(G_i)|$ for $1\leq i\leq c.$ The above inequality is equivalent to 
\[
(n-c)(n-c+1)\leq \sum_{i=1}^c n_i(n_i-1).
\] Set $m_i=n_i-1$ for $1\leq i\leq c.$ Then we get the equivalent inequality
\[
(\sum_{i=1}^c m_i)(\sum_{i=1}^c m_i +1)\leq \sum_{i=1}^c m_i(m_i+1)
\] or 
\[
(\sum_{i=1}^c m_i)^2\leq \sum_{i=1}^c m_i^2.
\] This inequality holds if and only if $c=1$, thus $G$ must be connected. Moreover, in this case, since $I_G$ has a linear resolution, we must have $\dim_K(I_G)_2={n\choose 2}=\dim_K(I_{K_n})_2,$ hence $G=K_n.$

 The implication $(c)\Rightarrow(b)$ is trivial, and $(a)\Rightarrow(c)$ is known; see, for example, \cite[Theorem 1]{C} and \cite[Corollary 3.9]{BCV}.
\end{proof}

In the next theorem we give an upper bound for the regularity of $I_G$ when $G$ is a closed graph. 

\begin{Theorem}\label{reg}
Let $G$ be a closed graph on the vertex set $[n].$ Then $\reg(S/I_G)\leq r$ where $r$ is the number of maximal cliques of $G.$
\end{Theorem}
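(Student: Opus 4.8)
The plan is to reduce the regularity bound to a combinatorial count on the initial monomial ideal. First I would use the standard inequality $\reg(S/I_G)\le\reg(S/\ini_{\rev}(I_G))$, valid for any monomial order because the graded Betti numbers do not decrease when passing to an initial ideal. By Theorem~\ref{GB} the given generators form a Gr\"obner basis, and by (the proof of) Proposition~\ref{initial} we have $\ini_{\rev}(I_G)=J:=\sum_{i=1}^{r}(x_{a_i+1},\dots,x_{b_i})^{2}$, where $F_i=[a_i,b_i]$ are the $r$ maximal cliques. Thus it suffices to show $\reg(S/J)\le r$.

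Next I would pass to an Artinian quotient. The variables $x_j$ occurring in no interval $[a_i+1,b_i]$ --- in particular $x_1$ and $x_{n+1}$, but also any ``gap'' variables in the disconnected case --- play no role: $S/J$ is a polynomial ring over the analogous quotient $\bar S/\bar J$ in the remaining variables, where $\bar S=K[x_j: j\in\bigcup_i[a_i+1,b_i]]$ and $\bar J=\sum_i(x_{a_i+1},\dots,x_{b_i})^2$, and adjoining polynomial variables does not change the regularity. Since $\bar J$ is $(x_j: j\in\bigcup_i[a_i+1,b_i])$-primary, the algebra $\bar S/\bar J$ has finite length, so only $\Coh{0}{\bar S/\bar J}$ is nonzero and its regularity equals its top nonvanishing degree. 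Hence $\reg(S/J)$ equals the largest degree of a monomial that is standard modulo $\bar J$.

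I would then describe these standard monomials. A monomial $m=\prod_j x_j^{e_j}$ is standard modulo $\bar J$ precisely when $\sum_{j=a_i+1}^{b_i}e_j\le 1$ for every $i$. Since each variable of $\bar S$ lies in at least one interval $[a_i+1,b_i]$, this forces $e_j\le 1$ for all $j$, so $m$ is squarefree; writing $W$ for its support, the condition becomes $|W\cap[a_i+1,b_i]|\le 1$ for all $i$. Assigning to each $j\in W$ an index $i$ with $j\in[a_i+1,b_i]$ gives a map $W\to\{1,\dots,r\}$ that is injective, for otherwise some interval would contain two elements of $W$. Therefore $\deg m=|W|\le r$, which gives $\reg(S/J)\le r$ and hence the theorem.

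The argument is short once Proposition~\ref{initial} is in hand; the step that requires the most care is the reduction to the Artinian quotient, namely isolating the non-occurring variables (so that the disconnected case, where the intervals need not cover $[2,n]$, is handled correctly) and reading off the regularity as the top degree. The combinatorial core --- squarefreeness of the standard monomials together with the injective assignment of their support to the cliques --- is the actual content of the bound, and it is sharp: for the line graph $P_n$ one has $r=n-1$ and $I_{P_n}$ is a complete intersection of $n-1$ quadrics, so $\reg(S/I_{P_n})=n-1=r$.
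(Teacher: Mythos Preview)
Your proof is correct, and the combinatorial heart --- standard monomials modulo $\ini_{\rev}(I_G)$ are squarefree with at most one variable from each clique interval, hence have degree $\le r$ --- is exactly the argument the paper gives. The packaging differs slightly: the paper invokes the Cohen--Macaulay property of $S/I_G$ to write $\reg(S/I_G)=\deg P(t)$ via the Hilbert series, then reduces modulo the regular sequence $x_1,x_{n+1}$ (in the connected case) to compute $P(t)$ directly; you instead use the general inequality $\reg(S/I_G)\le\reg(S/\ini_{\rev}(I_G))$ and read off the regularity of the Artinian quotient as its top nonzero degree. Your route is a touch more economical --- it does not need the Cohen--Macaulay input, and by stripping out all non-occurring variables at once it handles the disconnected case uniformly, whereas the paper treats the connected case first and then assembles the components via a tensor-product argument. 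The paper's approach, on the other hand, yields the $h$-polynomial explicitly, which is what makes the sharpness computation in Remark~\ref{regr} immediate.
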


\begin{proof}
Let $H_{S/I_G}(t)$ be the Hilbert series of $S/I_G.$ Then, since $\dim(S/I_G)=1+c,$ where $c$ is the number of connected components of $G,$ we have
\[
H_{S/I_G}(t)=\frac{P(t)}{(1-t)^{1+c}}
\] where $P(t)\in \ZZ[t]$ with $P(1)\neq 0.$ Since $I_G$ is Cohen-Macaulay, we have $\reg(S/I_G)=\deg(P).$

On the other hand, we have
\[
H_{S/I_G}(t)=H_{S/\ini_{rev}(I_G)}(t).
\]

Let us first suppose that $G$ is connected and 
let $F_1,\ldots F_r$ be the maximal cliques of $G$ where $F_i=[a_i,b_i]$ for $1\leq i\leq r$ with 
$1=a_1<a_2<\cdots <a_r<b_r=n.$ Then
{\small
\[
\ini_{\rev}(I_G)=\ini_{\rev}(I_{F_1})+\cdots +\ini_{\rev}(I_{F_r})=(x_2,\ldots, x_{b_1})^2+(x_{a_2+1},\ldots,x_{b_2})^2+\cdots+ 
(x_{a_{r-1}+1},\ldots,x_{n})^2.
\]
}
Then, as $x_1$ and $x_{n+1}$ are regular on $S/\ini_{\rev}(I_G)$, we get
\[
P(t)=H_{S/(\ini_{\rev}(I_G),x_1,x_{n+1})}(t)=h_0+h_1t+\cdots +h_qt^q
\] where $q=\deg(P)$ and $h_i=\dim(S/(\ini_{\rev}(I_G),x_1,x_{n+1}))_i$ for $0\leq i\leq q.$

In order to prove our statement, it is enough to show that $q\leq r.$ Let $i>r.$ We have to show that 
$\dim(S/(\ini_{\rev}(I_G),x_1,x_{n+1}))_i=0.$ But $\dim(S/(\ini_{\rev}(I_G),x_1,x_{n+1}))_i$ is equal to the number of squarefree monomials $w=x_F$ in the variables $x_2,\ldots,x_n$ such that $x_F\notin \ini_<(I_G)$ and $\deg x_F=i.$ Let $F=\{j_1,\ldots,j_i\}$ with $2\leq j_1<\cdots <j_i\leq n.$ Since $\deg x_F\geq r+1$, there exists $1\leq p<q\leq i$ such that 
$j_p$ and $j_q$ belong to the same clique $F_\ell$ of $G.$ This implies that $x_F\in\ini_<(I_G).$ Therefore, 
$\dim(S/(\ini_<(I_G),x_1,x_{n+1}))_i=0$ and, consequently, $\reg(S/I_G)=\deg(P)\leq r.$

Now, let $G_1,\ldots,G_c$ be the connected components of $G$ and let $r_i$ the number of cliques of $G_i$ for $1\leq i\leq c.$ We may assume that $V(G_i)=[n_{i}+1, n_{i+1}]$ for some integers 
$0=n_1<\cdots <n_c<n_{c+1}=n.$ We set $S_i=K[\{x_j: n_i+1\leq j\leq n_{i+1}\}]$ for $1\leq i\leq c.$ Let $M_i$ be the set of minimal monomial generators of $\ini_{\rev}(J_{G_i})$ for all $i. $ One observes that any two monomials $u \in M_i,$ $v\in M_j$ with $i\neq j,$ have disjoint supports. This implies that
\[
S/\ini_{\rev}(J_G)\cong\bigotimes_{i=1}^c S_i/\ini_{\rev}(J_{G_i}).
\] Consequently, \[\reg(S/J_G)=\reg(S/\ini_{\rev}(J_{G}))= \sum_{i=1}^c \reg(S_i/\ini_{\rev}(J_{G_i}))
\leq \sum_{i=1}^c r_i=r \]
\end{proof}

\begin{Remark}\label{regr}{\em
The upper bound given in the above theorem is sharp. Indeed, let $G$ be a closed graph with the maximal cliques $F_i=[a_i,a_{i+1}]$ where $1=a_1<a_2<\cdots<a_r<a_{r+1}=n.$ In this case, we have
\[
\ini_{\rev}(I_G)=(x_2,\ldots,x_{a_2})^2+(x_{a_2+1},\ldots,x_{a_3})^2+\cdots+(x_{a_r+1},\ldots,x_n)^2.
\]
Therefore,
\[
S/(\ini_{\rev}(I_G),x_1,x_{n+1})\cong (S_1/(x_2,\ldots,x_{a_2})^2)\otimes_K\cdots \otimes_K(S_r/(x_{a_r+1},\ldots,x_{n})^2)
\]
where $S_i=K[x_{a_i+1},\ldots,x_{a_{i+1}}]$ for all $i,$
which implies that 
\[
H_{S/(\ini_{\rev}(I_G),x_1,x_{n+1})}(t)=\prod_{i=1}^r(1+(a_{i+1}-a_i)t).
\] This shows that $\reg(S/I_G)=r.$
}
\end{Remark}

From Proposition~\ref{linres} and Theorem~\ref{reg}, we derive the following consequence.

\begin{Corollary}
Let $G$ be a closed graph with two maximal cliques. Then $\reg(S/I_G)=2.$
\end{Corollary}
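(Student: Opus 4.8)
The plan is to sandwich $\reg(S/I_G)$ between the upper bound already supplied by Theorem~\ref{reg} and a matching lower bound extracted from Proposition~\ref{linres}. Since $G$ has exactly two maximal cliques we have $r=2$, so Theorem~\ref{reg} gives at once $\reg(S/I_G)\leq 2$. All that remains is to exclude the value $\reg(S/I_G)=1$, which then forces equality.

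For the lower bound I would first record that $\reg(S/I_G)\geq 1$: the ideal $I_G$ is minimally generated by the quadratic binomials $g_{ij}$, so $\beta_{1,2}(S/I_G)\neq 0$ and hence $2-1=1$ is a genuine contribution to the regularity. Next I would invoke the standard fact that, for an ideal generated purely in degree $2$, one has $\reg(S/I_G)=1$ if and only if $I_G$ admits a ($2$-)linear resolution. Thus it suffices to show that $I_G$ has no linear resolution.

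This is precisely the content of Proposition~\ref{linres}: for a closed graph $G$, the ideal $I_G$ has a linear resolution if and only if $G$ is a complete graph. A complete graph $K_n$ has a single maximal clique, whereas our $G$ has two; therefore $G\neq K_n$, so $I_G$ has no linear resolution. Consequently $\reg(S/I_G)\neq 1$, and combining this with $\reg(S/I_G)\geq 1$ and $\reg(S/I_G)\leq 2$ yields $\reg(S/I_G)=2$.

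I do not expect a real obstacle here, since the corollary is a direct combination of the two preceding results and the argument is insensitive to whether $G$ is connected or a disjoint union of two cliques (in both cases $G\neq K_n$). The only step needing a line of care is the implication ``no linear resolution $\Rightarrow \reg(S/I_G)\geq 2$'', which relies on $I_G$ being generated entirely in degree $2$; this single-degree generation is exactly what makes ``linear resolution'' equivalent to $\reg(I_G)=2$, i.e.\ $\reg(S/I_G)=1$.
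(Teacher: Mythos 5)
Your argument is correct and is exactly the derivation the paper intends: Theorem~\ref{reg} with $r=2$ gives the upper bound, and Proposition~\ref{linres} rules out a linear resolution (hence $\reg(S/I_G)=1$) because a graph with two maximal cliques is not complete. The paper states the corollary as an immediate consequence of these two results without further proof, and your write-up simply makes the standard equivalence ``linear resolution $\Leftrightarrow\reg(S/I_G)=1$ for an ideal generated in degree $2$'' explicit.
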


The following example shows that the inequality given in Theorem~\ref{reg} may be also strict.

\begin{Example}{\em 
Let $G$ be the closed graph on the vertex set $[6]$ with the maximal cliques $F_1=[1,4],\ F_2=[3,5],$ and $F_3=[4,6]$. We have 
$\reg(S/I_G)=2<3.$
}
\end{Example}

{}


\begin{thebibliography}{}
\bibitem{BV} L. B\u adescu, G. Valla, {\em Grothendieck-Lefschetz theory, set-theoretic complete
intersections and rational normal scrolls}, J. Algebra, {\bf 324} (2010), 1636--1655.

\bibitem {BCV} W. Bruns, A. Conca, M. Varbaro, {\em Maximal minors and linear powers}, to appear in J. Reine Angew. Math.

\bibitem {BH} W. Bruns, J. Herzog, \textit{Cohen-Macaulay rings}, Revised Ed., Cambridge University Press, 1998.

\bibitem {C} A. Conca, {\em Hilbert function and resolution of the powers of the ideal of the rational normal curve,} J. Pure Appl. Algebra, {\bf 152} (2000), 65-74.

\bibitem{Ei} D. Eisenbud, {\em The geometry of syzygies: A second course in commutative algebra and algebraic geometry}, Graduate Texts in Mathematics, {\bf 229} Springer-Verlag, New York, 2005.

\bibitem{EHH} V. Ene, J. Herzog, T. Hibi, {\em Cohen-Macaulay binomial edge ideals}, Nagoya Math. J. {\bf 204} (2011),  57--68.

\bibitem{HH10} J. Herzog and T. Hibi, {\em Monomial Ideals}, Graduate Texts in Mathematics \textbf{260}, Springer, 2010.

\bibitem{HHHKR} J. Herzog, T. Hibi, F. Hreinsdotir, T. Kahle, J. Rauh, {\em Binomial edge ideals and conditional independence statements},
 Adv. Appl. Math. \textbf{45} (2010), 317--333.
 
%\bibitem{MM} K. Matsuda, S. Murai, {\em Regularity bounds for binomial edge ideals,} arXiv:1208.2415
 
\bibitem{Oh} M. Ohtani, {\em Graphs and Ideals generated by some $2$-minors},  Commun. Algebra {\bf 39} (2011), no. 3,  905--917.

%\bibitem{MK} S.~Saeedi Madani, D.~Kiani,  Binomial edge ideals of graphs, Electron. J. 
%Combin., {\bf 19}  no. 2 (2012), \# P44.

\end{thebibliography}
\end{document}